\newcommand{\pr}[1]{{\color{red}#1}}
\newtheorem{theorem}{Theorem}
\newtheorem{conjecture}[theorem]{Conjecture}
\newtheorem{lemma}[theorem]{Lemma}
\newtheorem{proposition}[theorem]{Proposition}
\theoremstyle{definition}
\newtheorem{definition}[theorem]{Definition}
\newtheorem{remark}[theorem]{Remark}
\author{Pablo Romero\footnote{Facultad de Ingenier\'ia, Universidad de la Rep\'ublica, Montevideo, Uruguay. E-mail address: \texttt{promero@fing.edu.uy}}}
\date{}
\begin{document}

\title{An algebraic characterization of strong graphs}

\maketitle

\begin{abstract}\let\thefootnote\relax
Let $G$ be a connected simple graph on $n$ vertices and $m$ edges. 
Denote $N_{i}^{(j)}(G)$ the number of spanning subgraphs of $G$ having precisely $i$ edges and not more than $j$ connected components. 
The graph $G$ is \emph{strong} if $N_{i}^{j}(G)\geq N_{i}^{j}(H)$ for each pair of integers $i\in \{0,1,\ldots,m\}$ and $j\in \{1,2,\ldots,n\}$ and each connected simple graph $H$ on $n$ vertices and $m$ edges. 
The graph $G$ is \emph{Whitney-maximum} if 
for each connected simple graph $H$ on $n$ vertices and $m$ edges there exists a polynomial $P_H(x,y)$ with nonnegative coefficients such that  $W_{G}(x,y)-W_H(x,y)=(1-xy)P_H(x,y)$, where $W_G$ and $W_H$ stand for the Whitney polynomial of $G$ and $H$. 
In this work it is proved that a graph is strong if and only if it is Whitney-maximum. 
Consequently, the $0$-element conjecture proposed by Boesch [J.\ Graph Theory 10 (1986), 339--352] is true when restricted to graph classes in which Whitney-maximum graphs exist. 
\end{abstract}

\renewcommand{\labelitemi}{--}

\section{Historic motivation}\label{section:intro}
During the half of the $20$th century there was a growing interest in two problems that are different in appearance, namely, the $4$-coloring problem and the design of networks with maximum reliability. Some key references in both problems are given in the following paragraphs and the interplay between them is explained,  which is the main source of inspiration for this work.

On the one hand Birkhoff~\cite{Birkhoff-1912}, interested in the $4$-coloring problem,  defined a univariate polynomial known as the chromatic polynomial for planar graphs. Later works by Whitney~\cite{Whitney-1932} and Tutte~\cite{Tutte-1954} considered generalizations of the chromatic polynomial using bivariate polynomials. The work of Tutte~\cite{Tutte-1954} linked the theory of spanning trees, electrical networks, and graph coloring. In his Ph.D. thesis, he proved the universality property of his eponymous polynomial. Essentially, each graph invariant that obeys the deletion-contraction formula can be obtained by a special evaluation of the Tutte polynomial. The interested reader can consult the recent book~\cite{Book-Tutte-2022} for a rich historic account of the Tutte polynomial and an updated reference in this topic.

On the other hand Moore and Shannon~\cite{1956-MooreShannon} wanted to design highly reliable computers using imperfect electronic components such as relays. The work of Moore and Shannon is considered a point of departure in the study of network reliability. The concept of uniformly most reliable graphs was later introduced by Boesch~\cite{1986-Boesch}. 
The existence and construction of uniformly most reliable graphs is a current research topic. Boesch et al.~\cite{1991-Boesch} found all uniformly most reliable graphs of corank up to $3$. 
Curiously enough, the first classes of uniformly most reliable graphs were determined by Kelmans~\cite{1981-Kelmans} some years before the definition was formally established in print. In his work he showed that if we remove an arbitrary matching to the complete graph then a uniformly most reliable graph is obtained. The reader can consult the recent survey~\cite{2022-Romero} for further details.

Even though the classical reliability problem aims to find the probability that a graph is connected subject to edge failures, Kelmans~\cite{1981-Kelmans} studied a much more general problem in which we are given a positive integer $k$ and the aim is to find the probability that a graph has at most $k$ connected components after each of its edges is independently deleted with given probabilities. This generalization due to Kelmans has not been further explored. The counting problems for the study of graphs having multiple connected components are even harder than the ones that appear in the classical reliability problem faced by Boesch, Moore and Shannon.

Joint works of Kahl~\cite{Kahl-2022}, and Kahl and Luttrell~\cite{Kahl-2023}, established links between uniformly most reliable graphs and the Tutte polynomial by means of a novel concept of a \emph{Tutte-maximum graph}. Using the universality property of the Tutte polynomial it follows that Tutte-maximum graphs are uniformly most reliable graphs. Kahl and Luttrell~\cite{Kahl-2023} then proceeded to recover the results  obtained by Boesch et al. on the existence of uniformly most reliable graphs of corank up to $3$ but using algebraic methods. 
Furthermore, Kahl~\cite{Kahl-2022} showed that each Tutte-maximum graph  simultaneously attains the extremal value (i.e., minimum or maximum) of several real-valued graph invariants among all graphs with a prescribed number of vertices and edges.

The goal of this work is to further explore the interplay between counting problems coming from network reliability analysis and algebraic methods that are available using the Tutte polynomial or Whitney polynomial. This article is organized as follows. A background including some concepts and preliminary results is given in Section~\ref{section:background}. The main result of this work is given in Section~\ref{section:main}, where it is proved that a graph is strong if and only if it is Whitney-maximum. In Section~\ref{section:consequences} it is proved that 
the $0$-element conjecture proposed by Boesch~\cite{1986-Boesch} is true when restricted to graph classes in which a Whitney-maximum graph exists. 

\section{Background}\label{section:background}
In this section we include the concept of Tutte-maximum graph~\cite{Kahl-2023} and some concepts on network reliability. We will also include the 0-element conjecture proposed by Boesch~\cite{1986-Boesch} as well as preliminary results which will be used in the sequel.

Let $G$ be a graph on $n$ vertices, $m$ edges and $\kappa(G)$ connected components. 
The \emph{corank of $G$}, denoted $c(G)$, equals $m-n+\kappa(G)$. The \emph{rank of $G$}, denoted $r(G)$, equals $n-\kappa(G)$. Let $\mathcal{S}(G)$ be the set of all spanning subgraphs of $G$. The \emph{Tutte polynomial of $G$} is denoted $T_G(x,y)$ and is defined as follows,
\begin{equation}\label{eq:tutte}
T_G(x,y) = \sum_{H \in \mathcal{S}(G)}(x-1)^{r(G)-r(H)}(y-1)^{c(H)}.   
\end{equation}

The \emph{Whitney polynomial of $G$} is defined as $W_G(x,y)=T_G(x+1,y+1)$, i.e., 
\begin{equation}\label{eq:Whitney}
W_G(x,y) = \sum_{H \in \mathcal{S}(G)}x^{r(G)-r(H)}y^{c(H)}.   
\end{equation}
A bivariate polynomial $P(x,y)$ is \emph{nonnegative} if there exists nonnegative integers $p$ and $q$ and nonnegative real numbers $a_{ij}$ such that $P(x,y)=\sum_{i=0}^p\sum_{j=0}^{q}a_{ij}x^iy^j$. 

Let $\mathcal{C}_{n,m}$ be the set of all connected simple graphs on $n$ vertices and $m$ edges. The concept of Tutte-maximum graphs 
introduced by Kahl and Luttrell~\cite{Kahl-2023} is a point of departure in the construction of algebraic methods to find uniformly most reliable graphs.

\begin{definition}
For each pair of graphs $G$ and $H$ in $\mathcal{C}_{n,m}$ we write $H\preceq G$ 
when $T_G(x,y)-T_{H}(x,y)=(x+y-xy)P_H(x,y)$ for some nonnegative polynomial $P_H(x,y)$. The graph $G$ is \emph{Tutte-maximum} if $H \preceq G$ for each $H$ in $\mathcal{C}_{n,m}$.    
\end{definition}
 
Now, let us present key concepts from network reliability. 
Let $G$ be any graph in $\mathcal{C}_{n,m}$. For each $k\in \{1,2,\ldots,n\}$ and each $p\in [0,1]$, the \emph{$k$-reliability of $G$ at $p$}, denoted $R_{G}^{(k)}(p)$, is the probability that $G$ has at most $k$ connected components after each of its edges is independently retained with probability $p$. 
For each $i \in \{0,1,\ldots,m\}$, let $N_i^{(k)}(G)$ be the number of spanning subgraphs $H$ in $G$ composed by $i$ edges having at most $k$ connected components. Clearly, 
\begin{equation}\label{eq:rk}
R_{G}^{(k)}(p) = \sum_{i=0}^{m}N_i^{(k)}(G)p^i(1-p)^{m-i}.     
\end{equation}

If $k \in \{1,2,\ldots,n\}$ then $G$ is a \emph{$k$-uniformly most reliable graph} ($k$-UMRG) if $R_G^{(k)}(p)\geq R_{H}^{(k)}(p)$ for each $H$ in $\mathcal{C}_{n,m}$ and $p$ in $[0,1]$. The graph $G$ is a \emph{uniformly most reliable graph}~\cite{1986-Boesch} if it is $1$-UMRG. Theorem~\ref{theorem:tutte-reliability} is a consequence of the universality property of the Tutte-polynomial~\cite{Book-Tutte-2022}. 
\begin{theorem}\label{theorem:tutte-reliability}
If $G$ is any graph in $\mathcal{C}_{n,m}$ and $p\in (0,1)$ then 
\begin{equation*}
R_{G}^{(1)}(p) = p^{n-1}(1-p)^{m-n+1}T_G\left(1,\frac{1}{1-p}\right)    
\end{equation*}
\end{theorem}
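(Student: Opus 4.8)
The plan is to prove the identity by a direct computation starting from the corank--nullity (spanning subgraph) expansion~\eqref{eq:tutte} of the Tutte polynomial; this is self-contained, although one could equally deduce it from the universality property of the Tutte polynomial, since the normalized reliability is a Tutte--Grothendieck invariant. Write $\kappa(H)$ for the number of connected components of a spanning subgraph $H$ of $G$. Since $G$ is connected, $r(G)=n-1$, and for a spanning subgraph $H$ with $i$ edges one has $r(H)=n-\kappa(H)$ and $c(H)=i-r(H)=i-n+\kappa(H)$, so that $r(G)-r(H)=\kappa(H)-1$.

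First I would evaluate $T_G$ at $x=1$. By~\eqref{eq:tutte},
\[
T_G(1,y)=\sum_{H\in\mathcal{S}(G)}0^{\,\kappa(H)-1}(y-1)^{c(H)},
\]
and with the convention $0^0=1$ the factor $0^{\kappa(H)-1}$ equals $1$ when $H$ is connected and $0$ otherwise. Hence only connected spanning subgraphs contribute, and for such an $H$ with $i$ edges we have $c(H)=i-n+1$. Grouping the surviving terms by the number of edges and using the definition of $N_i^{(1)}(G)$ (the number of connected spanning subgraphs of $G$ with $i$ edges) gives
\[
T_G(1,y)=\sum_{i=n-1}^{m}N_i^{(1)}(G)\,(y-1)^{i-n+1}.
\]

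Next I would substitute $y=\tfrac{1}{1-p}$, which is legitimate for $p\in(0,1)$, so that $y-1=\tfrac{p}{1-p}$, and multiply both sides by $p^{n-1}(1-p)^{m-n+1}$. Each term becomes
\[
N_i^{(1)}(G)\,p^{n-1}(1-p)^{m-n+1}\Bigl(\tfrac{p}{1-p}\Bigr)^{i-n+1}=N_i^{(1)}(G)\,p^{\,i}(1-p)^{\,m-i},
\]
after combining $p^{n-1}p^{i-n+1}=p^{i}$ and $(1-p)^{m-n+1}(1-p)^{-(i-n+1)}=(1-p)^{m-i}$. Summing over $i$ and comparing with~\eqref{eq:rk} taken at $k=1$ identifies the right-hand side as $R_G^{(1)}(p)$, which is the claim.

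I do not expect a genuine obstacle here: the argument is a short sequence of substitutions and exponent manipulations. The only points requiring a word of care are the convention $0^0=1$, which is what discards the disconnected spanning subgraphs at $x=1$, and the restriction $p\in(0,1)$, which guarantees that $\tfrac{1}{1-p}$ is well-defined and that the term-by-term rearrangement — in particular dividing by positive powers of $1-p$ — is valid.
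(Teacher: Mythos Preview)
Your argument is correct. The substitution $x=1$ in the corank--nullity expansion, the use of $0^0=1$ to isolate connected spanning subgraphs, and the exponent bookkeeping after setting $y=\tfrac{1}{1-p}$ are all valid; the only tacit point is that $N_i^{(1)}(G)=0$ for $i<n-1$, so extending the sum down to $i=0$ to match~\eqref{eq:rk} is harmless.

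The paper, however, does not give a proof of this theorem at all: it simply asserts that the identity is a consequence of the universality property of the Tutte polynomial and cites~\cite{Book-Tutte-2022}. Your proposal therefore differs in that it supplies a short, self-contained derivation directly from the spanning-subgraph definition~\eqref{eq:tutte}, bypassing the deletion--contraction machinery entirely. You even anticipate the paper's route by remarking that universality would also do the job. What your approach buys is transparency and independence from external results; what the paper's one-line appeal to universality buys is brevity and a reminder that reliability sits inside the general framework of Tutte--Grothendieck invariants.
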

As corollary, each Tutte-maximum graph is uniformly most reliable~\cite{Kahl-2023}. 

A graph $G$ is a \emph{0-element} in $\mathcal{C}_{n,m}$ if $N_i^{(1)}(G)\geq N_i^{(1)}(H)$ for each $i$ in $\{0,1,\ldots,m\}$ and each $H$ in $\mathcal{C}_{n,m}$. From equation~\eqref{eq:rk} it is clear that each $0$-element is uniformly most reliable. The converse is an unresolved conjecture proposed in 1986 by Boesch~\cite{1986-Boesch}.
\begin{conjecture}[Boesch~\cite{1986-Boesch}]\label{conjecture1}
Each uniformly most reliable graph in $\mathcal{C}_{n,m}$ is a $0$-element in $\mathcal{C}_{n,m}$.
\end{conjecture}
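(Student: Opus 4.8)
The plan is to route the conjecture through the algebraic machinery of strong and Whitney-maximum graphs rather than attacking the reliability polynomials head-on, since pointwise domination of two polynomials on $[0,1]$ does not in general force coordinatewise domination of their coefficients; the graph-theoretic content has to enter through a distinguished maximal element. First I would record the two elementary implications that frame the problem. On one side, a strong graph is automatically a $0$-element: specializing its defining inequalities $N_i^{(j)}(G)\ge N_i^{(j)}(H)$ to $j=1$ recovers exactly the $0$-element condition, so by the paper's main result (a graph is strong if and only if it is Whitney-maximum) every Whitney-maximum graph in $\mathcal{C}_{n,m}$ is a $0$-element. On the other side, the excerpt already observes from equation~\eqref{eq:rk} that each $0$-element is uniformly most reliable, because every $N_i^{(1)}(G)\ge N_i^{(1)}(H)$ and each polynomial $p^i(1-p)^{m-i}$ is nonnegative on $[0,1]$. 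Thus a Whitney-maximum graph is a canonical candidate for the top of the reliability order.

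Second, I would exploit the rigidity of that order at its top. Suppose a Whitney-maximum graph $G^{\star}$ exists in $\mathcal{C}_{n,m}$; by the previous paragraph it is a $0$-element and hence uniformly most reliable. Let $G$ be any uniformly most reliable graph. Then $R_G^{(1)}\ge R_{G^{\star}}^{(1)}$ and $R_{G^{\star}}^{(1)}\ge R_G^{(1)}$ on $[0,1]$, so the two reliability polynomials agree on infinitely many points and therefore coincide as polynomials. Since the polynomials $p^i(1-p)^{m-i}$, $i=0,\ldots,m$, form a linearly independent family (the Bernstein basis up to scaling), comparing coefficients in equation~\eqref{eq:rk} gives $N_i^{(1)}(G)=N_i^{(1)}(G^{\star})$ for every $i$. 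Because $G^{\star}$ is a $0$-element, $N_i^{(1)}(G^{\star})\ge N_i^{(1)}(H)$ for all $H\in\mathcal{C}_{n,m}$ and all $i$, and the equalities just obtained transfer this domination to $G$; hence $G$ is a $0$-element. This settles the conjecture on every class $\mathcal{C}_{n,m}$ that contains a Whitney-maximum graph.

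The hard part — and the reason the unconditional statement resists this route — is existence. The argument certifies a uniformly most reliable graph as a $0$-element only after exhibiting a single graph that is simultaneously maximal in every coefficient $N_i^{(1)}$ (indeed in every $N_i^{(j)}$), i.e.\ a Whitney-maximum graph, and such graphs need not exist for arbitrary $(n,m)$: the partial order by reliability polynomials may possess a maximum that is not realized coordinatewise by any single graph. On a class lacking a Whitney-maximum graph the algebraic certificate $W_{G}-W_H=(1-xy)P_H$ is simply unavailable, so there is no polynomial identity forcing a uniformly most reliable graph's coefficient vector to dominate termwise. I therefore expect the honest reach of this method to be precisely the conditional form — the conjecture holds whenever a Whitney-maximum graph exists — and I would flag that closing the general case requires either a direct structural proof that a uniformly most reliable graph must maximize each connected-subgraph count $N_i^{(1)}$, or a construction guaranteeing a Whitney-maximum graph in every $\mathcal{C}_{n,m}$, neither of which is delivered by the tools developed here.
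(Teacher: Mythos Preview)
Your proposal is correct in substance and matches the paper's treatment: Conjecture~\ref{conjecture1} is left open in the paper, and what both you and the paper actually establish is the conditional Proposition that if $\mathcal{C}_{n,m}$ contains a Whitney-maximum graph then every uniformly most reliable graph there is a $0$-element. Your argument for that conditional result is essentially the paper's own---Whitney-maximum implies strong implies $0$-element implies UMRG, then equality of reliability polynomials for two UMRGs forces equality of the coefficients $N_i^{(1)}$, transferring the $0$-element property---with the minor addition that you make explicit the linear independence of the Bernstein family $\{p^i(1-p)^{m-i}\}$ where the paper simply asserts the coefficient equality. Your third paragraph correctly delimits the method's reach, which is exactly the scope the paper claims.
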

The concept of a $0$-element in $\mathcal{C}_{n,m}$ was introduced by Boesch~\cite{1986-Boesch} and its name is explained in the following. For each $G$ in $\mathcal{C}_{n,m}$ we define $\mu_i(G)=\binom{m}{i}-N_i^{(1)}(G)$ where $i\in \{0,1,\ldots,m\}$. Denote $\mu(G)=(\mu_0(G),\mu_1(G),\ldots,\mu_m(G))$. Two graphs $G$ and $H$ in $\mathcal{C}_{n,m}$ are \emph{equivalent} if $\mu(G)=\mu(H)$. The class of equivalent graphs in $\mathcal{C}_{n,m}$ equipped with the lexicographic order among tuples $\mu(G)$ is a partially ordered set. A minimum element, if any, is a $0$-element as defined by Boesch. 


If $G$ has at least $k$ vertices then its \emph{$k$-order edge connectivity of $G$}, denoted $\lambda^{(k)}(G)$, equals the minimum number of edges that must be removed to $G$ to obtain a spanning subgraph $H$ such that $\kappa(H)> k$. The maximization of the $1$-edge connectivity of a graph was studied by Harary~\cite{1962-Harary}. The second-order edge connectivity of $G$ was discussed by Goldsmith et al.~\cite{Goldsmith-1980}. 

For each $k$ in $\{1,2,\ldots,n\}$, $t_k(G)$ denotes the number of spanning forests in $G$ composed by $k$ trees. The number $t_1(G)$ is known as the \emph{tree-number} of $G$. A graph $G$ in $\mathcal{C}_{n,m}$ is called \emph{$t$-optimal} if $t_1(G)\geq t_1(H)$ for all $H$ in $\mathcal{C}_{n,m}$. The determination of $t$-optimal graphs in each nonempty class $\mathcal{C}_{n,m}$ is still unresolved; the reader can find a characterization of some $t$-optimal graphs in~\cite{2002-Petingi}.


Several graph invariants are evaluations of the Tutte or Whitney polynomial. As an example, let us derive the \emph{forest generating function} of a graph $G$ in 
$\mathcal{C}_{n,m}$ defined by $\sum_{i=0}^{n-1}t_{i+1}(G)x^i$. If $\mathcal{F}(G)$ denotes the set of all forests of $G$ then
\begin{equation*}
W_G(x,0)=\sum_{H \in \mathcal{F}(G)}x^{\kappa(H)-1}=\sum_{i=0}^{n-1}t_{i+1}(G)x^i, 
\end{equation*}
where we used that there are precisely $t_i(G)$ forests in $G$ composed by $i$ trees. Then, for each $i \in \{1,2,\ldots,n\}$, 
\begin{equation*}
t_i(G) = \dfrac{\partial^{i-1}W}{\partial x^{i-1}}(0,0).
\end{equation*}
In particular, the tree-number $t_1(G)$
of a graph $G$ equals $W_G(0,0)$. The reader can find a list of real-valued graph invariants that are evaluations of the Tutte-polynomial in the recent book~\cite{Book-Tutte-2022}. It is worth to remark that a great variety of graph invariants are either maximized or minimized among all graphs in $\mathcal{C}_{n,m}$ by Tutte-maximum graphs; see
~\cite{Kahl-2023} for further details. 





\section{Main result} \label{section:main}
Boesch~\cite{1986-Boesch} discussed the existence of a $0$-element in $\mathcal{C}_{n,m}$. 
Let us consider the following generalization. 
\begin{definition}
A graph $G$ in $\mathcal{C}_{n,m}$ is \emph{strong} if $N_i^{(k)}(G)\geq N_i^{(k)}(H)$ 
for each $H\in \mathcal{C}_{n,m}$ and each pair of integers $i \in \{0,1,\ldots,m\}$ and $k\in \{1,2,\ldots,n\}$. 
\end{definition}
By equation~\eqref{eq:rk}, each strong graph is $k$-UMRG for all $k\in \{1,2,\ldots,n\}$. As a consequence, the existence and construction of strong graphs is a relevant topic in network reliability in the wide sense as presented by Kelmans~\cite{1981-Kelmans}. In this section we will find an algebraic characterization of strong graphs. Consider the following definition inspired by the definition of Tutte-maximum graphs.

\begin{definition}
For each pair of graphs $G$ and $H$ in $\mathcal{C}_{n,m}$ we write $H\preceq_W G$ 
when $W_G(x,y)-W_{H}(x,y)=(1-xy)Q_H(x,y)$ for some nonnegative polynomial $Q_H(x,y)$. The graph $G$ is \emph{Whitney-maximum} if $H \preceq_W G$ for each $H$ in $\mathcal{C}_{n,m}$.    
\end{definition}

All the properties satisfied by Whitney-maximum graphs are also satisfied by Tutte-maximum graphs. In fact, the following lemma holds.
\begin{lemma}\label{lemma:TutteareWhitney}
Each Tutte-maximum graph is Whitney-maximum.    
\end{lemma}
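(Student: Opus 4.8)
The plan is to exploit directly the substitution $W_G(x,y) = T_G(x+1,y+1)$ that relates the two polynomials, together with the elementary algebraic identity that transforms the Tutte factor $x+y-xy$ into the Whitney factor $1-xy$.

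First I would invoke the hypothesis that $G$ is Tutte-maximum: for every $H \in \mathcal{C}_{n,m}$ there is a nonnegative polynomial $P_H(x,y)$ with $T_G(x,y) - T_H(x,y) = (x+y-xy)\,P_H(x,y)$. I would then perform the change of variables $x \mapsto x+1$, $y \mapsto y+1$ on both sides. The left-hand side becomes $T_G(x+1,y+1) - T_H(x+1,y+1) = W_G(x,y) - W_H(x,y)$ by the definition of the Whitney polynomial. For the right-hand side I compute $(x+1)+(y+1)-(x+1)(y+1) = 1-xy$, so the identity reads $W_G(x,y) - W_H(x,y) = (1-xy)\,P_H(x+1,y+1)$.

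It then remains to check that $Q_H(x,y) := P_H(x+1,y+1)$ is again a nonnegative polynomial, so that the relation $H \preceq_W G$ is witnessed. Writing $P_H(x,y) = \sum_{i,j} a_{ij}\,x^i y^j$ with all $a_{ij}\ge 0$ and expanding by the binomial theorem, one obtains $Q_H(x,y) = \sum_{i,j} a_{ij} \sum_{k\le i}\sum_{\ell\le j}\binom{i}{k}\binom{j}{\ell}\,x^k y^\ell$, and each coefficient of $Q_H$ is a nonnegative integer combination of the $a_{ij}$; hence $Q_H$ has nonnegative coefficients. Therefore $H \preceq_W G$ for every $H \in \mathcal{C}_{n,m}$, i.e., $G$ is Whitney-maximum.

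There is essentially no hard step here: the proof reduces to the one-line identity $(x+1)+(y+1)-(x+1)(y+1)=1-xy$ and the equally routine observation that the shift $P(x,y)\mapsto P(x+1,y+1)$ preserves nonnegativity of coefficients. The only point deserving a word of care is applying the substitution consistently to both the difference $T_G-T_H$ and its factored form, which is immediate once the defining identity is written out in full.
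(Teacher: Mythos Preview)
Your proof is correct and follows essentially the same approach as the paper: substitute $x\mapsto x+1$, $y\mapsto y+1$ to pass from $T$ to $W$, use the identity $(x+1)+(y+1)-(x+1)(y+1)=1-xy$, and observe that the shift preserves nonnegativity of coefficients. Your binomial-expansion justification of this last point is slightly more explicit than the paper's one-line assertion, but the argument is otherwise identical.
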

\begin{proof}
Let $G$ be a Tutte-maximum graph in $\mathcal{C}_{n,m}$ and $H$ be any graph in $\mathcal{C}_{n,m}$. As $G$ is Tutte-maximum, there exists some nonnegative polynomial $P(x,y)$ such that $T_G(x,y)-T_{H}(x,y)=(x+y-xy)P(x,y)$. Consequently, 
\begin{align*}
W_G(x,y)-W_{H}(x,y)&= T_G(x+1,y+1)-T_{H}(x+1,y+1)\\ 
&= ((x+1)+(y+1)-(x+1)(y+1))P(x+1,y+1)\\
&= (1-xy)P(x+1,y+1).
\end{align*}
As $P(x,y)$ is nonnegative, $P(x+1,y+1)$ is also nonnegative. The lemma follows. 
\end{proof}

A natural question that arises is whether or not each Whitney-maximum graph is Tutte-maximum. Let $G$ and $H$ be the graphs depicted in  Figure~\ref{figure:GandH}. 

\begin{figure}[htb]
\begin{center}
\scalebox{1}{
\begin{tabular}{c}
\begin{tikzpicture}[scale=1]

\coordinate(a) at (0,0);
\coordinate(b) at (0,1);
\coordinate(c) at (0,2);
\coordinate(d) at (0,3);
\coordinate(e) at (2,0);
\coordinate(f) at (2,1);
\coordinate(g) at (2,2);
\coordinate(h) at (2,3);

\draw (a)--(e);
\draw (a)--(f);
\draw (a)--(g);
\draw (a)--(h);
\draw (b)--(e);
\draw (b)--(f);
\draw (b)--(g);
\draw (b)--(h);
\draw (c)--(e);
\draw (c)--(f);
\draw (c)--(g);
\draw (c)--(h);
\draw (d)--(e);
\draw (d)--(f);
\draw (d)--(g);
\draw (d)--(h);

\draw[bend right=-20, black] (a) to (b);
\draw[bend right=-20, black] (c) to (d);

\foreach \p in {a,b,c,d,e,f,g,h}
       \fill [black] (\p) circle (3pt);
\end{tikzpicture} 
\\
$G$
\end{tabular}
\begin{tabular}{c}
\begin{tikzpicture}[scale=1]
\coordinate(b) at (0,0);
\coordinate(c) at (1,0);
\draw[white] (b)--(c);
\end{tikzpicture}
\end{tabular}
\begin{tabular}{c}
\begin{tikzpicture}[scale=1]

\coordinate(a) at (0,0);
\coordinate(b) at (0,1);
\coordinate(c) at (0,2);
\coordinate(d) at (0,3);
\coordinate(e) at (2,0);
\coordinate(f) at (2,1);
\coordinate(g) at (2,2);
\coordinate(h) at (2,3);

\draw (a)--(e);
\draw (a)--(f);
\draw (a)--(g);
\draw (a)--(h);
\draw (b)--(e);
\draw (b)--(f);
\draw (b)--(g);
\draw (b)--(h);
\draw (c)--(e);
\draw (c)--(f);
\draw (c)--(g);
\draw (c)--(h);
\draw (d)--(e);
\draw (d)--(f);
\draw (d)--(g);
\draw (d)--(h);

\draw[bend right=-20, black] (c) to (d);
\draw[bend right=20, black] (g) to (h);

\foreach \p in {a,b,c,d,e,f,g,h}
       \fill [black] (\p) circle (3pt);
\end{tikzpicture} 
\\
$H$
\end{tabular}
}
\end{center}
\caption{Graphs $G$ and $H$. \label{figure:GandH}}
\end{figure}
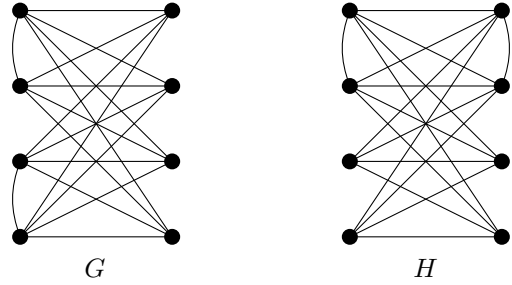

Computation shows that $G$ is the only Whitney-maximum graph in $\mathcal{C}_{8,18}$ up to isomorphism. However, 
\begin{equation*}
T_G(x,y)-T_H(x,y)=(x+y-xy)P(x,y),     
\end{equation*}
where $P(x,y)=4xy^5 + x^3y^2 + 4x^2y^3 + 12xy^4 + 2x^3y + 13x^2y^2 + 24xy^3 - x^3 + x^2y + 9xy^2 - 8y^3 - 4x^2 - 12xy - 19y^2 - 7x - 15y - 4$. As $P(x,y)$ has negative coefficients, $G$ is not Tutte-maximum disproving the converse of Lemma~\ref{lemma:TutteareWhitney}.

Now, let us study the relationship between strong graphs and Whitney-maximum graphs. The following technical lemma will be useful.
\begin{lemma}\label{lemma:NkG}
Let $G$ be any graph in $\mathcal{C}_{n,m}$. For each $k\in \{1,2,\ldots,n\}$ and 
each $i \in \{0,1,\ldots,m\}$,
\begin{equation*}
N_i^{(k)}(G) = \sum_{j=\max\{1,n-i\}}^{k}\frac{1}{(j-1)!(i-n+j)!}\frac{\partial^{2j+i-n-1} W_G}{\partial x^{j-1}y^{i-n+j}}(0,0).
\end{equation*}
\end{lemma}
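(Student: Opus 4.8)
The plan is to expand $W_G$ explicitly as a polynomial in $x$ and $y$ whose coefficients are exactly the numbers counting spanning subgraphs with a prescribed number of edges and a prescribed number of components, and then to recover $N_i^{(k)}(G)$ by summing the relevant coefficients, each coefficient being extracted from $W_G$ via Taylor's formula at the origin.

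First I would rewrite the defining sum~\eqref{eq:Whitney}. Since $G$ is connected, $r(G)=n-1$. For a spanning subgraph $H$ of $G$ with $e(H)$ edges and $\kappa(H)$ connected components one has $r(H)=n-\kappa(H)$, hence $r(G)-r(H)=\kappa(H)-1$, and $c(H)=e(H)-r(H)=e(H)-n+\kappa(H)$. Grouping the spanning subgraphs of $G$ according to their number of edges $i$ and their number of components $j$, and writing $a_{i,j}(G)$ for the number of spanning subgraphs with exactly $i$ edges and exactly $j$ components, this yields
\[
W_G(x,y)=\sum_{i,j} a_{i,j}(G)\, x^{\,j-1}\, y^{\,i-n+j}.
\]
This really is a polynomial: a spanning subgraph with $i$ edges has at least $\max\{1,n-i\}$ components (adding an edge decreases the number of components by at most one, starting from the $n$ isolated vertices), so $a_{i,j}(G)=0$ whenever $j<\max\{1,n-i\}$; in particular every exponent occurring is nonnegative.

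Next I would invoke the standard fact that, for a bivariate polynomial, the coefficient of $x^{a}y^{b}$ equals $\frac{1}{a!\,b!}\frac{\partial^{a+b}W_G}{\partial x^{a}\,\partial y^{b}}(0,0)$. Applying this with $a=j-1$ and $b=i-n+j$, so that $a+b=2j+i-n-1$, identifies $a_{i,j}(G)$ with the corresponding derivative term in the statement. Finally, since by definition $N_i^{(k)}(G)=\sum_{j=1}^{k}a_{i,j}(G)$ and all terms with $j<\max\{1,n-i\}$ vanish, the summation may be started at $j=\max\{1,n-i\}$, which is precisely the claimed identity (and on this range the derivative orders $j-1$ and $i-n+j$ are both nonnegative, so every term is meaningful).

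I do not expect a serious obstacle: the argument is essentially a direct computation. The only places that require care are the translation between the exponent pair $(r(G)-r(H),\,c(H))$ appearing in~\eqref{eq:Whitney} and the pair (number of edges, number of components), and checking that the index set $\max\{1,n-i\}\le j\le k$ is exactly the set of $j$ contributing to $N_i^{(k)}(G)$ while keeping the orders of differentiation nonnegative.
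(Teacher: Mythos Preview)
Your proposal is correct and follows essentially the same route as the paper: both arguments identify the coefficient of $x^{j-1}y^{i-n+j}$ in $W_G$ as the number of spanning subgraphs with $i$ edges and $j$ components, extract it via Taylor's formula at the origin, and then sum over $j$ from $\max\{1,n-i\}$ to $k$. You are simply a bit more explicit than the paper in deriving the exponents $(j-1,\,i-n+j)$ from the rank/corank definitions.
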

\begin{proof}
Let $G$ be any graph in $\mathcal{C}_{n,m}$ and $i$ and $k$ as in the statement. Define $N_{i,j}(G)$ 
as the number of spanning graphs of $G$ composed precisely by $i$ edges and $j$ connected components. 
Observe that $N_i^{(k)}(G)=\sum_{j=1}^{k}N_{i,j}(G)$ and, further, if $j<n-i$ then $N_{i,j}(G)=0$. Therefore,
\begin{equation}\label{eq:reduced}
N_i^{(k)}(G) = \sum_{j=1}^{k}N_{i,j}(G) =  \sum_{j=\max\{1,n-i\}}^{k}N_{i,j}(G).   
\end{equation}
Let $j$ be any positive integer such that $j\geq n-i$. 
By definition, $N_{i,j}(G)$ equals the coefficient of $x^{j-1}y^{i-n+j}$ in $W_G(x,y)$. Consequently, 
\begin{equation}\label{eq:Taylor}
N_{i,j}(G) = \frac{1}{(j-1)!(i-n+j)!}\frac{\partial^{2j+i-n-1} W_G}{\partial x^{j-1}y^{i-n+j}}(0,0).
\end{equation}
The lemma follows replacing equation~\eqref{eq:Taylor} into \eqref{eq:reduced}. 
\end{proof}

\begin{lemma}\label{lemma:compareN}
If $G$ and $H$ are two graphs in $\mathcal{C}_{n,m}$ 
such that $H \preceq_W G$ then, for each $i \in \{0,1,\ldots,m\}$ and each $k\in \{1,2,\ldots,n\}$, 
$N_i^{(k)}(H) \leq N_i^{(k)}(G)$. 
\end{lemma}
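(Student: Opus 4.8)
The plan is to translate the hypothesis $H \preceq_W G$ into a statement about individual coefficients of the Whitney polynomials and then exploit the very special shape of the factor $1-xy$. I would write $Q_H(x,y) = \sum_{a,b\ge 0} q_{ab}\,x^a y^b$ with all $q_{ab}\ge 0$, adopting the convention that $q_{ab}=0$ whenever $a<0$ or $b<0$. Expanding the product, the coefficient of $x^s y^t$ in
\[
(1-xy)\,Q_H(x,y) \;=\; W_G(x,y)-W_H(x,y)
\]
is exactly $q_{s,t}-q_{s-1,t-1}$. This is the only structural fact about $1-xy$ that the argument uses.

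Next I would bring in the quantity $N_{i,j}(G)$ from the proof of Lemma~\ref{lemma:NkG}, namely the coefficient of $x^{j-1}y^{i-n+j}$ in $W_G(x,y)$, together with the identity $N_i^{(k)}(G)=\sum_{j=j_0}^{k} N_{i,j}(G)$, where $j_0=\max\{1,n-i\}$. Subtracting, and using the coefficient computation above with $s=j-1$ and $t=i-n+j$, I get
\[
N_i^{(k)}(G)-N_i^{(k)}(H)\;=\;\sum_{j=j_0}^{k}\bigl(q_{j-1,\,i-n+j}-q_{j-2,\,i-n+j-1}\bigr).
\]
Setting $f(j):=q_{j-1,\,i-n+j}$, the summand is $f(j)-f(j-1)$, so the sum telescopes to $f(k)-f(j_0-1)$. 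The term $f(k)=q_{k-1,\,i-n+k}$ is nonnegative because $Q_H$ is a nonnegative polynomial, so everything reduces to showing the boundary term $f(j_0-1)=q_{j_0-2,\,i-n+j_0-1}$ is zero.

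The verification that this boundary term vanishes is the one spot requiring a small case split, and it is the mild obstacle I anticipate: if $j_0=1$ then its first index is $j_0-2=-1<0$, while if $j_0=n-i>1$ then its second index is $i-n+j_0-1=-1<0$; in either case $f(j_0-1)=0$ by the sign convention. One should also dispose of the degenerate situation $k<j_0$, where the summation range is empty and both $N_i^{(k)}(G)$ and $N_i^{(k)}(H)$ are $0$, so the inequality holds trivially. Putting these together yields $N_i^{(k)}(G)-N_i^{(k)}(H)=q_{k-1,\,i-n+k}\ge 0$, which is the desired conclusion.
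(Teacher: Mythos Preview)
Your proof is correct and follows essentially the same approach as the paper: both arguments express $N_i^{(k)}(G)-N_i^{(k)}(H)$ as a telescoping sum over $j$ and conclude that it equals the single nonnegative coefficient $q_{k-1,\,i-n+k}$ of $Q_H$. The only difference is cosmetic---the paper extracts coefficients via partial derivatives evaluated at $(0,0)$ and applies the Leibniz rule, whereas you work directly with the coefficient array $q_{a,b}$, which is cleaner and makes the vanishing of the boundary term more transparent.
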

\begin{proof}
Let $G$ and $H$ be two graphs in $\mathcal{C}_{n,m}$ and let $i$ and $k$ as in the statement. 
Define $Q(x,y)=1-xy$. 
As $H \preceq_W G$, there exists some nonnegative polynomial $P(x,y)$ such that $W_G(x,y)-W_{H}(x,y)=P(x,y)Q(x,y)$. Let $j$ be any integer in $\{1,2,\ldots,k\}$ such that $i-n+j\geq 0$. Observe that the second and higher order derivatives of $Q(x,y)$ with respect to $x$ are equal to $0$. Then, by Leibniz rule,  
\begin{align}
\frac{\partial^{j-1} (PQ)}{\partial x^{j-1}}(x,y) &= \sum_{\ell=0}^{j-1}\binom{j-1}{\ell}\frac{\partial^{\ell} P}{\partial x^{\ell}}(x,y)\frac{\partial^{j-1-\ell} Q}{\partial x^{j-1-\ell}}(x,y) \notag\\
&= \frac{\partial^{j-1} P}{\partial x^{j-1}}(x,y)Q(x,y)+(j-1)\frac{\partial^{j-2} P}{\partial x^{j-2}}(x,y)\frac{\partial Q}{\partial x}(x,y), \label{eq:exp}
\end{align}
where we use the convention that the second term on the right hand side of equation~\eqref{eq:exp} is $0$ when $j=1$. 

If we now derive $i-n+j$ times expression~\eqref{eq:exp} with respect to $y$ followed by an evaluation at $(x,y)=(0,0)$, 
\begin{align*}
 &\frac{\partial^{2j+i-n-1} (PQ)}{\partial x^{j-1}\partial y^{i-n+j}}(0,0)=\sum_{\ell=0}^{i-n+j}\binom{i-n+j}{\ell}\frac{\partial^{j-1+\ell} P}{\partial x^{j-1}\partial y^{\ell} }(0,0)\frac{\partial^{i-n+j-\ell} Q}{\partial y^{i-n+j-\ell}}(0,0)\\
 &+ (j-1)\sum_{\ell=0}^{i-n+j}\binom{i-n+j}{\ell}\frac{\partial^{j-2+\ell} P}{\partial x^{j-2}\partial y^{\ell}}(0,0)\frac{\partial^{i-n+j-\ell+1} Q}{\partial x \partial y^{i-n+j-\ell}}(0,0)\\
 &= \frac{\partial^{2j+i-n-1} P}{\partial x^{j-1}\partial y^{i-n+j}}(0,0)-(j-1)(i-n+j)\frac{\partial^{2j+i-n-3} P}{\partial x^{j-2}\partial y^{i-n+j-1}}(0,0),
\end{align*}
where we used Leibniz rule and the  facts that $\frac{\partial Q}{\partial y^s}(0,0)=0$ for each $s\geq 1$ and that $\frac{\partial Q}{\partial x \partial y}(0,0)=-1$. Now, by Lemma~\ref{lemma:NkG}, 
\begin{align*}
&N_i^{(k)}(G)-N_{i}^{(k)}(H)\\
&=\sum_{j=\max\{1,n-i\}}^{k}\frac{1}{(j-1)!(i-n+j)!}\frac{\partial^{2j+i-n-1} (W_G-W_H)}{\partial x^{j-1} \partial y^{i-n+j}}(0,0) \\
&= \sum_{j=\max\{1,n-i\}}^{k}\frac{1}{(j-1)!(i-n+j)!}\frac{\partial^{2j+i-n-1} (PQ)}{\partial x^{j-1} \partial y^{i-n+j}}(0,0)\\
&=\sum_{j=\max\{1,n-i\}}^{k}\frac{1}{(j-1)!(i-n+j)!}\cdot\\
&\left(\frac{\partial^{2j+i-n-1} P}{\partial x^{j-1}\partial y^{i-n+j}}(0,0)-(j-1)(i-n+j)\frac{\partial^{2j+i-n-3} P}{\partial x^{j-2}\partial y^{i-n+j-1}}(0,0)\right)\\
&= \frac{1}{(k-1)!(i-n+k)!}\frac{\partial^{2k+i-n-1} P}{\partial x^{k-1}\partial y^{i-n+k}}(0,0)\geq 0,
\end{align*}
where we identified a telescopic sum in the last equality and we used that $P(x,y)$ is nonnegative in the inequality. Consequently, $N_{i}^{(k)}(H) \leq N_i^{(k)}(G)$, as required. 
\end{proof}

\begin{lemma}\label{lemma:strongWhitney}
Each strong graph is Whitney-maximum.     
\end{lemma}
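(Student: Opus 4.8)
The plan is to run the computation from the proof of Lemma~\ref{lemma:compareN} backwards. Fix a strong graph $G$ and an arbitrary $H \in \mathcal{C}_{n,m}$. The first step is to produce the quotient polynomial: substituting $y = 1/x$ in \eqref{eq:Whitney} and using $r(H')+c(H') = |E(H')|$ for each spanning subgraph $H'$, one gets $W_G(x,1/x) = x^{n-1}(1+x^{-1})^m$, which depends only on $n$ and $m$. Hence $W_G(x,1/x) = W_H(x,1/x)$, so $1-xy$ (an irreducible polynomial) divides $W_G - W_H$ in $\mathbb{Q}[x,y]$, and I write $W_G - W_H = (1-xy)\,Q_H$ with $Q_H \in \mathbb{Q}[x,y]$ uniquely determined. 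Comparing $x$- and $y$-degrees, using $\deg_x W_G \le r(G) = n-1$ and $\deg_y W_G \le c(G) = m-n+1$, gives $\deg_x Q_H \le n-2$ and $\deg_y Q_H \le m-n$.

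The second step is to read off the coefficients of $Q_H$. The derivation inside the proof of Lemma~\ref{lemma:compareN}, applied with $P = Q_H$, uses only the factorization $W_G - W_H = (1-xy)Q_H$ and not the nonnegativity of $P$; it shows that for all $k \in \{1,\ldots,n\}$ and $i \in \{0,\ldots,m\}$ with $i-n+k\ge 0$, the coefficient of $x^{k-1}y^{i-n+k}$ in $Q_H$ equals $N_i^{(k)}(G)-N_i^{(k)}(H)$. Now every monomial $x^a y^b$ actually occurring in $Q_H$ satisfies $0 \le a \le n-2$ and $0 \le b \le m-n$ by the degree bounds, so it is of that form with $k := a+1 \in \{1,\ldots,n\}$ and $i := b-a+n-1 \in \{0,\ldots,m\}$ (note $i-n+k = b \ge 0$). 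Since $G$ is strong, $N_i^{(k)}(G)-N_i^{(k)}(H)\ge 0$ for every such pair, so all coefficients of $Q_H$ are nonnegative. Thus $H \preceq_W G$, and as $H$ was arbitrary, $G$ is Whitney-maximum.

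The delicate point is the second step: one must check that the telescoping identity of Lemma~\ref{lemma:compareN} is invoked only where it is meaningful -- precisely when $i-n+k \ge 0$, i.e.\ when $x^{k-1}y^{i-n+k}$ is a genuine monomial -- and, crucially, that the degree bounds on $Q_H$ force every monomial of $Q_H$ into the family $\{x^{k-1}y^{i-n+k}: 1\le k\le n,\ 0\le i\le m,\ i-n+k\ge 0\}$, so that strongness covers all of them. A self-contained alternative avoids Lemma~\ref{lemma:compareN}: expanding $W_G - W_H = Q_H - xyQ_H$ and comparing coefficients yields, along each diagonal $b-a=\mathrm{const}$, a first-order recursion that telescopes (using that $Q_H$ has no negative powers) to a partial sum of the numbers $N_{i,j}(G)-N_{i,j}(H)$; for $a\le b$ this partial sum is exactly $N_i^{(a+1)}(G)-N_i^{(a+1)}(H)$, and for $a>b$ the missing low-index terms vanish because a spanning subgraph with $i=b-a+n-1$ edges has strictly more than $a-b$ components, so the partial sum again equals $N_i^{(a+1)}(G)-N_i^{(a+1)}(H)\ge 0$.
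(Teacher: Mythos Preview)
Your argument is correct and takes a genuinely different route from the paper. The paper proceeds combinatorially: given a strong $G$ and an arbitrary $G'\in\mathcal{C}_{n,m}$, it builds a bijection $\varphi:\mathcal{S}(G)\to\mathcal{S}(G')$ that preserves the number of edges and satisfies $\kappa(\varphi(H))\ge\kappa(H)$; then for each $H$ the monomial difference $x^{r(G)-r(H)}y^{c(H)}-x^{r(G')-r(\varphi(H))}y^{c(\varphi(H))}$ equals $x^{r(G)-r(H)}y^{c(H)}(1-(xy)^s)$ with $s=\kappa(\varphi(H))-\kappa(H)\ge 0$, which visibly factors as $(1-xy)$ times a nonnegative polynomial, and summing gives the result.

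Your approach is algebraic: the identity $W_G(x,1/x)=x^{n-1}(1+x^{-1})^m$ (depending only on $n,m$) forces $(1-xy)\mid (W_G-W_H)$, and then the diagonal recursion $q_{A,B}=q_{A-1,B-1}+\big(N_{i,A+1}(G)-N_{i,A+1}(H)\big)$, $i=B-A+n-1$, telescopes to $q_{A,B}=N_i^{(A+1)}(G)-N_i^{(A+1)}(H)$; the degree bounds $\deg_xQ_H\le n-2$, $\deg_yQ_H\le m-n$ ensure every monomial of $Q_H$ falls in the range where strongness applies. This has the merit of showing that Lemma~\ref{lemma:compareN} is essentially reversible and that the coefficients of the quotient $Q_H$ are \emph{exactly} the differences $N_i^{(k)}(G)-N_i^{(k)}(H)$, making Theorem~\ref{theorem:equivalence} transparent. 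The paper's bijective proof, on the other hand, is more self-contained (it does not lean on Lemma~\ref{lemma:compareN}) and yields an explicit combinatorial witness for the nonnegative polynomial.
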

\begin{proof}
Let $G$ be a strong graph in $\mathcal{C}_{n,m}$ and let $G'$ be any graph in $\mathcal{C}_{n,m}$. The proof strategy is the following. First, we will construct a bijective mapping $\varphi$ from $\mathcal{S}(G)$ to $\mathcal{S}(G')$ such that 
$|E(\varphi(H))|=|E(H)|$ and $\kappa(\varphi(H)) \geq \kappa(H)$ 
for each $H$ in $\mathcal{S}(G)$. 
Then, we will prove that for each spanning subgraph $H$ in $\mathcal{S}(G)$ it holds that 
$x^{r(G)-r(H)}y^{c(H)}-x^{r(G')-r(\varphi(H))}y^{c(\varphi(H))}=(1-xy)P_H(x,y)$ for some nonnegative polynomial $P_H(x,y)$. The lemma will follow by summing the contributions of all spanning subgraphs $H$ in $\mathcal{S}(G)$ and the fact that the addition of nonnegative polynomials is also a nonnegative polynomial.

Now, we will construct the bijective mapping $\varphi$ satisfying the aforementioned conditions. Consider, for each $i\in \{0,1,\ldots,m\}$ and 
each $j,k\in \{1,2,\ldots,n\}$, the following sets,
\begin{align*}
\mathcal{S}_{i,j}(G)&=\{H:H\in \mathcal{S}(G), |E(H)|=i, \kappa(H)=j\},\\
\mathcal{S}_{i}^{(k)}(G)&=\{H:H\in \mathcal{S}(G), |E(H)|=i, \kappa(H)\leq k\}.
\end{align*}
The sets $\mathcal{S}_{i,j}(G')$ and $\mathcal{S}_{i}^{(k)}(G')$ are defined analogously (replacing $G$ by $G'$). Observe that $N_i^{(k)}(G)=|\mathcal{S}_{i}^{(k)}(G)|$  
and $\mathcal{S}_{i}^{(k)}(G) = \cup_{j=1}^{k}\mathcal{S}_{i,j}(G)$. 
As $G$ is strong, 
$N_i^{(1)}(G)\geq N_i^{(1)}(G')$ and the set $\mathcal{S}_{i}^{(1)}(G)$ has at least as many elements as $\mathcal{S}_{i}^{(1)}(G')$. Pick some set 
$\mathcal{G}_{i}^{(1)}$ in $\mathcal{S}_{i}^{(1)}(G)$ such that 
$|\mathcal{G}_{i}^{(1)}|=|\mathcal{S}_{i}^{(1)}(G')|$. Similarly, 
as $N_i^{(2)}(G)\geq N_i^{(2)}(G')$, the set $\mathcal{S}_{i}^{(2)}(G)$ has at least as many elements as $\mathcal{S}_{i}^{(2)}(G')$. 
Equivalently, $|\mathcal{S}_{i}^{(2)}(G)-\mathcal{G}_{i}^{(1)}| \geq |\mathcal{S}_{i}^{(2)}(G')-\mathcal{S}_{i}^{(1)}(G')|=|\mathcal{S}_{i,2}(G')|$. Consequently, there exists some set $\mathcal{G}_{i}^{(2)}$ in 
$\mathcal{S}_{i}^{(2)}(G)-\mathcal{G}_{i}^{(1)}$ such that $|\mathcal{G}_{i}^{(2)}|=|\mathcal{S}_{i,2}(G')|$. If we repeat this process we will construct, for each $i$ in $\{0,1,\ldots,m\}$, some disjoint sets 
$\mathcal{G}_{i}^{(1)},\ldots,\mathcal{G}_{i}^{(n)}$ such that 
$|\mathcal{G}_{i}^{(j)}|=|\mathcal{S}_{i,j}(G')|$ for each $j\in \{1,2,\ldots,n\}$, where $\mathcal{G}_{i}^{(j)} \subseteq \mathcal{S}_{i}^{(j)}(G)$. Further, observe that the sets $\mathcal{G}_{i}^{(1)},\ldots,\mathcal{G}_{i}^{(n)}$ are not only disjoint but also define a partition of 
$\mathcal{S}_i(G)$, since 
\begin{equation*}
|\mathcal{S}_i(G)| \geq \sum_{j=1}^{n}|\mathcal{G}_i^{(j)}|=\sum_{j=1}^{n} |\mathcal{S}_{i,j}(G')| = |\mathcal{S}_i(G')| = |\mathcal{S}_i(G)|     
\end{equation*}
thus the previous expression is a chain of equalities. Consequently, the set of spanning subgraphs in $G$ can be written as follows:   
$\mathcal{S}(G)=\cup_{i=0}^{m}\mathcal{S}_i(G)=\cup_{i=0}^{m}\cup_{j=1}^{n}\mathcal{G}_{i}^{(j)}$, and clearly, $\mathcal{S}(G')=\cup_{i=0}^{m}\cup_{j=1}^{n}\mathcal{S}_{i,j}(G')$. By construction, we know that $|\mathcal{G}_i^{(j)}|=|\mathcal{S}_{i,j}(G')|$ for each $i\in \{0,1,\ldots,m\}$ and $j\in \{1,2,\ldots,n\}$. Then, for 
each $i\in \{0,1,\ldots,m\}$ and $j\in \{1,2,\ldots,n\}$ 
there exists some bijective mapping $\varphi_{i,j}$ from $\mathcal{G}_i^{(j)}$ to $\mathcal{S}_{i,j}(G')$. Define the mapping $\varphi:\mathcal{S}(G)\to \mathcal{S}(G')$ such that, for each $H$ in $\mathcal{G}_{i}^{(j)}$,  
$\varphi(H)=\varphi_{i,j}(H)$. In the following we will denote $H'=\varphi(H)$.

Now, we will prove that for each $H$ in $\mathcal{S}(G)$, 
$\kappa(H')\geq \kappa(H)$. In fact, as $H$ belongs to $\mathcal{S}(G)$, there exists $i\in \{0,1,\ldots,m\}$ and $j\in \{1,2,\ldots,n\}$ such that $H \in \mathcal{G}_{i}^{(j)}$, and $H' \in \mathcal{S}_{i,j}(G')$. On the one hand,  $\mathcal{G}_{i}^{(j)}\subseteq \mathcal{S}_{i}^{(j)}$, and 
$\kappa(H)\leq j$. On the other hand, as $H' \in \mathcal{S}_{i,j}$, 
we have that $\kappa(H')=j$. Consequently, $\kappa(H')\geq \kappa(H)$.

Let us define, for each spanning graph $H$ in $\mathcal{G}_{i}^{(j)}$, 
the function $f_H(x,y)$ given by $x^{r(G)-r(H)}y^{c(H)}-x^{r(G')-r(H')}y^{c(H')}$.


By construction $|E(H)|=|E(H')|=i$ and $\kappa(H')\geq \kappa(H)$. 
Define $s$ as $\kappa(H')-\kappa(H)$. 
As both $G$ and $G'$ are connected on $n$ vertices, $r(G)=r(G')$ and  
$r(G')-r(H')=r(G)-r(H)+s$. Additionally, 
$c(H')=i-n+\kappa(H')=i-n+\kappa(H)+s=c(H)+s$. If $s=0$ 
then $f_H(x,y)=0$ and defining $P_H(x,y)=0$ we get that $f_H(x,y)=(1-xy)P(x,y)$ where $P(x,y)$ is nonnegative. Otherwise,
\begin{align*}
f_H(x,y)&=x^{r(G)-r(H)}y^{c(H)}-x^{r(G')-r(H')}y^{c(H')}\\
&= x^{r(G)-r(H)}y^{c(H)} (1-(xy)^s)=(x^{r(G)-r(H)}y^{c(H)}\sum_{\ell=0}^{s-1}(xy)^{\ell})(1-xy)\\
&=P_H(x,y)(1-xy), 
\end{align*}
where $P_H(x,y)=x^{r(G)-r(H)}y^{c(H)}\sum_{\ell=0}^{r-1}(xy)^{\ell}$ is a nonnegative polynomial. Then, 
\begin{equation*}
W_G(x,y)-W_{G'}(x,y)=\sum_{H\in \mathcal{S}(G)}f_H(x,y)=(\sum_{H\in \mathcal{S}(G)}P_H(x,y))(1-xy).    
\end{equation*}
As the addition of nonnegative polynomials is nonnegative, the lemma follows. 
\end{proof}

\begin{theorem}\label{theorem:equivalence}
A graph is Whitney-maximum if and only if it is strong. 
\end{theorem}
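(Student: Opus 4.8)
The plan is to establish the equivalence by combining the two implications, one of which has essentially already been proved.

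First, the direction ``strong $\Rightarrow$ Whitney-maximum'' is exactly the content of Lemma~\ref{lemma:strongWhitney}, so nothing further is needed there; I would simply cite it.

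Second, for the direction ``Whitney-maximum $\Rightarrow$ strong'', I would let $G$ be a Whitney-maximum graph in $\mathcal{C}_{n,m}$ and let $H$ be an arbitrary graph in $\mathcal{C}_{n,m}$. By definition of Whitney-maximum we have $H \preceq_W G$, so by Lemma~\ref{lemma:compareN} it follows that $N_i^{(k)}(H) \leq N_i^{(k)}(G)$ for every $i \in \{0,1,\ldots,m\}$ and every $k \in \{1,2,\ldots,n\}$. Since $H$ was arbitrary, this is precisely the statement that $G$ is strong. Thus the theorem follows immediately by conjoining Lemma~\ref{lemma:strongWhitney} and Lemma~\ref{lemma:compareN} together with the definitions of ``strong'' and ``Whitney-maximum''.

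Since both implications reduce to already-established lemmas, there is no real obstacle remaining at this stage; the substantive work was done earlier. If pressed to identify the delicate point in the overall argument, it is the construction of the bijection $\varphi$ in the proof of Lemma~\ref{lemma:strongWhitney}: one must verify that the greedily chosen sets $\mathcal{G}_i^{(j)}$ genuinely partition $\mathcal{S}_i(G)$ (which follows from the chain of inequalities collapsing to equalities, using that $G$ and $G'$ have the same number of spanning subgraphs with $i$ edges) and that the resulting map increases the number of connected components componentwise. That verification, together with the telescoping-sum computation in Lemma~\ref{lemma:compareN} (which hinges on the second and higher $x$-derivatives of $1-xy$ vanishing), carries the entire weight of the theorem, so the proof of Theorem~\ref{theorem:equivalence} itself is a one-line assembly.
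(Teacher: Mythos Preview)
Your proposal is correct and matches the paper's proof essentially line for line: both directions are obtained by invoking Lemma~\ref{lemma:compareN} (for Whitney-maximum $\Rightarrow$ strong) and Lemma~\ref{lemma:strongWhitney} (for strong $\Rightarrow$ Whitney-maximum), with the definitions doing the rest. Your additional commentary on where the substantive work lies is accurate but extraneous to the proof of Theorem~\ref{theorem:equivalence} itself.
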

\begin{proof}
Let $G$ be a graph in $\mathcal{C}_{n,m}$. First, assume $G$ is Whitney-maximum. Let $H$ be any graph in $\mathcal{C}_{n,m}$. 
As $G$ is Whitney-maximum, $H \preceq_W G$. By Lemma~\ref{lemma:compareN} we know  that $N_i^{(k)}(G)\geq N_i^{(k)}(H)$ for each $i \in \{0,1,\ldots,m\}$ and each $k\in \{1,2,\ldots,n\}$. As $H$ is an arbitrary graph in $\mathcal{C}_{n,m}$ we conclude that $G$ is a strong graph. 
By Lemma~\ref{lemma:strongWhitney} we know that each strong graph is Whitney-maximum. The theorem follows. 
\end{proof}

\section{Applications} \label{section:consequences}
Remark~\ref{remark:invariants} gives closed forms for the $k$-order edge connectivity and the number of spanning forests composed by $k$ trees of $G$ as a function of its coefficients $N_i^{(k)}(G)$. 
\begin{remark}\label{remark:invariants}\,
If $k$ is a positive integer and $G$ is a graph in $\mathcal{C}_{n,m}$ such that $n\geq k$, then the following equalities hold
\begin{enumerate}[label=(\roman*)]
\item $\lambda^{(k)}(G)=\min\{x: x\geq 0, N_{m-x}^{(k)}(G)<\binom{m}{m-x}\}$,
\item $t_k(G)=N_{n-k}^{(k)}(G)$.
\end{enumerate}
\end{remark}

We are in position to prove the following result.
\begin{theorem}\label{theorem:main}
Each Whitney-maximum graph $G$ in $\mathcal{C}_{n,m}$ is 
$k$-UMRG for all $k\in \{1,2,\ldots,n\}$. For each $k\in \{1,2,\ldots,n\}$, the graph $G$ has the maximum $k$-edge order connectivity as well as the maximum number of spanning forests composed by $k$ trees. 
\end{theorem}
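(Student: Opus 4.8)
The plan is to do essentially no new work: the substance is already contained in Theorem~\ref{theorem:equivalence}, which identifies Whitney-maximum graphs with strong graphs, and in Remark~\ref{remark:invariants}, which expresses $\lambda^{(k)}$ and $t_k$ through the coefficients $N_i^{(k)}$. So the first step is to invoke Theorem~\ref{theorem:equivalence} to deduce that the Whitney-maximum graph $G$ is strong, i.e. that $N_i^{(k)}(G)\geq N_i^{(k)}(H)$ holds for every $H\in\mathcal{C}_{n,m}$, every $i\in\{0,1,\dots,m\}$ and every $k\in\{1,2,\dots,n\}$. All three assertions of the theorem will then follow by reading off this coefficient-wise domination.

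For the $k$-UMRG claim I would fix $k\in\{1,\dots,n\}$, an arbitrary $H\in\mathcal{C}_{n,m}$, and $p\in[0,1]$, and appeal to equation~\eqref{eq:rk}: since $p^i(1-p)^{m-i}\geq 0$ for $p\in[0,1]$ and $N_i^{(k)}(G)\geq N_i^{(k)}(H)$ for each $i$, summing term by term gives $R_G^{(k)}(p)\geq R_H^{(k)}(p)$; as $H$ and $p$ are arbitrary, $G$ is $k$-UMRG for this $k$, and hence for all $k$. For the number of spanning forests with $k$ trees, Remark~\ref{remark:invariants}(ii) gives immediately $t_k(G)=N_{n-k}^{(k)}(G)\geq N_{n-k}^{(k)}(H)=t_k(H)$ for every $H\in\mathcal{C}_{n,m}$, so $G$ maximizes $t_k$.

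For the $k$-order edge connectivity I would use Remark~\ref{remark:invariants}(i). Fix $k$ and an arbitrary $H\in\mathcal{C}_{n,m}$, and compare the two index sets $A_G=\{x\geq 0: N_{m-x}^{(k)}(G)<\binom{m}{m-x}\}$ and $A_H=\{x\geq 0: N_{m-x}^{(k)}(H)<\binom{m}{m-x}\}$. If $x\in A_G$ then $N_{m-x}^{(k)}(H)\leq N_{m-x}^{(k)}(G)<\binom{m}{m-x}$, so $x\in A_H$; thus $A_G\subseteq A_H$, and taking minima gives $\lambda^{(k)}(G)=\min A_G\geq \min A_H=\lambda^{(k)}(H)$. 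Since $H$ was arbitrary, $G$ has maximum $k$-order edge connectivity.

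I do not expect a genuine obstacle here, since the heavy lifting is done by Lemmas~\ref{lemma:compareN} and~\ref{lemma:strongWhitney} (via Theorem~\ref{theorem:equivalence}) and by Remark~\ref{remark:invariants}. The one point that needs a moment of care is the direction of the inequality in the edge-connectivity argument: coefficient domination \emph{shrinks} the set over which one minimizes in Remark~\ref{remark:invariants}(i), which is exactly what \emph{increases} the minimum; getting this monotonicity backwards is the only plausible slip. Everything else is a direct substitution.
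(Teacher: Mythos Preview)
Your proposal is correct and follows essentially the same route as the paper: invoke Theorem~\ref{theorem:equivalence} to get that $G$ is strong, use equation~\eqref{eq:rk} for the $k$-UMRG claim, and read off the remaining two maximizations from Remark~\ref{remark:invariants} via the coefficient-wise inequalities $N_i^{(k)}(G)\geq N_i^{(k)}(H)$. The paper's proof is slightly terser (it cites Lemma~\ref{lemma:compareN} rather than re-deriving the inequalities from the definition of strong, and it leaves the monotonicity in Remark~\ref{remark:invariants}(i) implicit), but the logic is the same.
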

\begin{proof}
Let $G$ be any Whitney-maximum graph in $\mathcal{C}_{n,m}$. 
By Theorem~\ref{theorem:equivalence} we know that $G$ is strong. Therefore, by equation~\eqref{eq:rk} it follows that $G$ is $k$-UMRG for each 
$k\in \{1,2,\ldots,n\}$. 

Now, let $H$ be any graph in $\mathcal{C}_{n,m}$. As $G$ is Whitney-maximum, $H \preceq_W G$. Therefore,  Lemma~\ref{lemma:compareN} gives that $N_i^{(k)}(H)\leq N_i^{(k)}(G)$ for each $k\in \{1,2,\ldots,n\}$ and each $i \in \{0,1,\ldots,m\}$. The second sentence of the statement follows from Remark~\ref{remark:invariants}. The theorem follows. 
\end{proof}

Conjecture~\ref{conjecture1} is still unresolved. Nevertheless, such conjecture is true when restricted to graph classes $\mathcal{C}_{n,m}$ for which a Whitney-maximum graph exists. 
\begin{proposition}
If there exists some Whitney-maximum graph in $\mathcal{C}_{n,m}$ 
then each uniformly most reliable graph in $\mathcal{C}_{n,m}$ is a $0$-element in $\mathcal{C}_{n,m}$.    
\end{proposition}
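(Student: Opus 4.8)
The plan is to use the hypothesised Whitney-maximum graph as a certificate that pins down the optimal coefficient sequence, and then to argue that every uniformly most reliable graph must realise that same sequence. First I would fix a Whitney-maximum graph $G^\star$ in $\mathcal{C}_{n,m}$ (which exists by hypothesis) and invoke Theorem~\ref{theorem:equivalence} to conclude that $G^\star$ is strong. Specialising the defining inequality of a strong graph to $k=1$ shows that $N_i^{(1)}(G^\star)\ge N_i^{(1)}(H)$ for every $H\in\mathcal{C}_{n,m}$ and every $i\in\{0,1,\ldots,m\}$, i.e.\ $G^\star$ is a $0$-element; and by equation~\eqref{eq:rk} (equivalently, by Theorem~\ref{theorem:main}) $G^\star$ is uniformly most reliable.

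Next, let $G$ be an arbitrary uniformly most reliable graph in $\mathcal{C}_{n,m}$. Since both $G$ and $G^\star$ are uniformly most reliable, we get $R_G^{(1)}(p)\ge R_{G^\star}^{(1)}(p)$ and $R_{G^\star}^{(1)}(p)\ge R_G^{(1)}(p)$ for every $p\in[0,1]$, hence $R_G^{(1)}(p)=R_{G^\star}^{(1)}(p)$ on $[0,1]$, and therefore identically as polynomials in $p$. Writing both sides in the form of equation~\eqref{eq:rk} and using that the Bernstein-type polynomials $\{p^i(1-p)^{m-i}\}_{i=0}^{m}$ form a basis of the space of polynomials of degree at most $m$ (they are carried to $\{p^i\}_{i=0}^m$ by the binomial theorem), I would match coefficients to obtain $N_i^{(1)}(G)=N_i^{(1)}(G^\star)$ for each $i\in\{0,1,\ldots,m\}$.

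Finally, combining this with the fact that $G^\star$ is a $0$-element, for every $H\in\mathcal{C}_{n,m}$ and every $i$ we have $N_i^{(1)}(G)=N_i^{(1)}(G^\star)\ge N_i^{(1)}(H)$, so $G$ is a $0$-element in $\mathcal{C}_{n,m}$, as required. The only non-formal step is the passage from the equality of the two reliability polynomials to the equality of the counting coefficients, which rests on the linear independence of the basis $\{p^i(1-p)^{m-i}\}$; this is routine, so I do not expect a genuine obstacle here, since the whole weight of the proposition is borne by Theorem~\ref{theorem:equivalence} (and hence by Lemmas~\ref{lemma:compareN} and~\ref{lemma:strongWhitney}).
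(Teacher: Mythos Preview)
Your argument is correct and follows essentially the same route as the paper: fix a Whitney-maximum graph, use Theorem~\ref{theorem:equivalence} (the paper cites Theorem~\ref{theorem:main}, which itself rests on Theorem~\ref{theorem:equivalence}) to see it is strong, hence a $0$-element and uniformly most reliable, and then transfer the coefficient sequence to any other uniformly most reliable graph via equality of the reliability polynomials. Your explicit justification of the coefficient-matching step through the linear independence of $\{p^i(1-p)^{m-i}\}_{i=0}^{m}$ is a detail the paper leaves implicit, but otherwise the proofs coincide.
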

\begin{proof}
Let $G$ be a Whitney-maximum graph in $\mathcal{C}_{n,m}$. By Theorem~\ref{theorem:main} we know that $G$ is strong. Therefore, $G$ is a $0$-element and uniformly most reliable as well. Let $H$ be any uniformly most reliable graph in $\mathcal{C}_{n,m}$. It is enough to prove that $H$ is a $0$-element. 
If $H$ is isomorphic to $G$ then we are done. Otherwise, as both graphs $G$ and $H$ are uniformly most reliable it follows that $R_{G}^{(1)}(p)=R_{H}^{(1)}(p)$ for all $p\in [0,1]$ and consequently, $N_i^{(1)}(G)=N_i^{(1)}(H)$ for each $i\in \{0,1,\ldots,m\}$. As $G$ is a $0$-element it follows that $H$ is also a $0$-element as required.      
\end{proof}

\section*{Acknowledgments}
This work is partially supported by City University of New York project entitled \emph{On the problem of characterizing graphs with maximum number of spanning trees}, grant number 66165-00. The author wants to thank Dr. Mart\'in Safe for his helpful  comments that improved the presentation of this manuscript.

\bibliographystyle{plain}

\end{document}